%

\documentclass[aap,MSNbibl,seceqn,citesort,dvips]{arximspdf}
\usepackage{mathbh}

%

\doi{10.1214/11-AAP815} 
\volume{22}
\issue{5}
\pubyear{2012}
\firstpage{1860}
\lastpage{1879}

\makeatletter
\newcommand{\eqref}[1]{(\ref{#1})}

\newcommand{\etas}{{\eta^*_{\ell}}}
\newcommand{\defeq}{\stackrel{\mathrm{def}}{=}}

\newtheorem{thmm}{Theorem}[section]

\newcommand{\pr}{{\mathbb{P}}}
\newcommand{\ex}{{\mathbb{E}}}
\newcommand{\openone}{\mathbh{1}}
\newcommand{\bls}{{\lfloor{\ell/s}\rfloor}}
\newcommand{\cls}{{\lceil{\ell/s}\rceil}}

\newtheorem{lem}{Lemma}[section]

\newtheorem{cor}{Corollary}[section]
\makeatother

\begin{document}
\begin{frontmatter}

\title{Tracking a random walk first-passage time through~noisy observations}
\runtitle{Tracking stopping times}

\begin{aug}
\author[A]{\fnms{Marat V.} \snm{Burnashev}\thanksref{t1}\ead[label=e1]{burn@iitp.ru}}
\and
\author[B]{\fnms{Aslan} \snm{Tchamkerten}\corref{}\thanksref{t2}\ead[label=e2]{aslan.tchamkerten@telecom-paristech.fr}}
\thankstext{t1}{Supported by the Russian Fund for Fundamental
Research (project number 09-01-00536).}
\thankstext{t2}{Supported in part by an Excellence Chair Grant from
the French National Research Agency (ACE project).}
\runauthor{M. V. Burnashev and A. Tchamkerten}
\affiliation{Russian Academy of Sciences and Telecom ParisTech}
\address[A]{Institute for Information\\
\quad Transmission Problems\\
Russian Academy of Sciences\\
Moscow\\
Russia\\
\printead{e1}} 
\address[B]{Communications and Electronics Department\\
Telecom ParisTech\\
75634 Paris Cedex 13\\
France\\
\printead{e2}}
\end{aug}

\received{\smonth{5} \syear{2010}}
\revised{\smonth{3} \syear{2011}}

%
\begin{abstract}
Given a Gaussian random walk (or a Wiener
process), possibly with drift, observed
through noise, we
consider the problem of estimating its
first-passage time $\tau_\ell$ of a given level $\ell$ with a stopping
time~$\eta$ defined over the noisy observation process.

Main results are upper and lower bounds on the
minimum mean absolute deviation $\inf_\eta
\ex|\eta-\tau_\ell|$ which
become tight as $\ell\to\infty$. Interestingly, in this regime
the estimation error does not get smaller if we allow $ \eta$ to be an
arbitrary
function of the entire observation process, not necessarily a stopping time.

In the particular case where there is no drift, we show that it is
impossible to
track $\tau_\ell$: $\inf_\eta
\ex|\eta-\tau_\ell|^p=\infty$ for any $\ell>0$ and $p\geq1/2$.
\end{abstract}

%
\begin{keyword}[class=AMS]
\kwd[Primary ]{60G40}
\kwd[; secondary ]{62L10}.
\end{keyword}
\begin{keyword}
\kwd{Optimal stopping}
\kwd{quickest decision}
\kwd{sequential analysis}.
\end{keyword}

\end{frontmatter}

\section{Introduction} The tracking stopping time (TST) problem, recently
introduced in~\cite{NT}, is formulated as
follows. Let $X=\{X_t\}_{t\geq0}$ be a stochastic
process and let $\tau$ be a stopping time defined over $X$.
A statistician has access to $X$ only through correlated
observations $Y=\{Y_t\}_{t\geq0}$ and
wishes to find a~stopping $\eta$ that gets close
to $\tau$, for instance, so as to
minimize the average absolute deviation $\ex|\eta-\tau|$.
For specific
applications of the TST problem formulation related to monitoring,
forecasting and communication, we refer to~\cite{NT}.

In~\cite{NT}, an algorithmic solution is proposed for
discrete-time settings where the $(X_t,Y_t)$'s take on
values in a common finite alphabet (otherwise $X$ and~$Y$
are arbitrary processes) and where $\tau$ is bounded.
What motivated an algorithmic approach to this
problem is
that the TST problem generalizes the Bayesian change-point detection
problem, a
long-studied problem that dates back to the $1940$s, and for which
nonasymptotic solutions are known to be hard to obtain.

In the Bayesian change-point problem, there is a random variable
$\theta$, taking on values in the positive integers, and two
probability distributions $P_0$ and~$P_1$. Under $P_0$, the
conditional density function of $Z_t$ given
$Z_1,Z_2,\ldots,Z_{t-1}$ is $f_0(Z_t|Z_1,Z_2,\ldots,Z_{t-1})$, for
every $t\geq0$. Under $P_1$, the conditional density function of
$Z_t$ given $Z_1,Z_2,\ldots,Z_{t-1}$ is
$f_1(Z_t|Z_1,Z_2,\ldots,Z_{t-1})$, for every $t\geq0$. The
observed process $Y=\{Y_t\}_{t\geq0}$ is distributed according to
$P_0$ for all
$t<\theta$ and according to $P_1$
for all $t\geq\theta$. The problem typically consists in finding a
stopping time $\eta$, with respect to $\{Y_t\}$,
that is, close to $\tau$.

Nonasymptotic results for the Bayesian change-point problem
have been reported mostly for the i.i.d. case where, conditioned on the
change-point value, observations are independent with
common
distribution $P_0$
and $P_1$ before and after the change~\cite{Shi3,Shi}.\setcounter{footnote}{2}\footnote{An
exception is
\cite{Y} which considers Markov chains, but of finite state.}

The TST problem can be seen as a Bayesian change-point problem whose
change-point $\tau$ is a stopping time defined with respect to an unobserved
process~$X$ that depends on the observed
process $Y$. What specifically differentiates a
TST problem from a Bayesian change-point problem
is that for the latter we always have the
identity
\[
\pr(\theta=k|Y_0,Y_1,
\ldots,Y_n,k>n)=\pr(\theta=k|k>n),\qquad
k>n .
\]
In contrast, the above identity with $\theta=\tau$ need not hold for a
TST problem.
Because of this, past
observations are in general useful for estimating $\tau$. Furthermore, the
observed process $Y$ has usually memory once
conditioned on $\tau$.\footnote{Unless the TST problem
under consideration reduces to a
Bayesian change-point problem with independent
observations before and after the change.} This
is what makes the TST problem hard.

In this paper, we investigate
the natural setting case where $X$ is a Gaussian random
walk (or a Wiener process) possibly with drift, where $Y$ is a~noisy
version of
$X$, and where $\tau$ is the first time when $X$
reaches a given level~$\ell$. We establish a lower
bound on $\inf_\eta\ex|\eta-\tau|$, where the infimum is over all
stopping times with respect to $Y$, then exhibit a stopping rule
that achieves this bound as $\ell\to\infty$.
In the case where $X$ does not drift, we show that
$\ex|\eta-\tau|=\infty$ for any $\ell>0$ and any
estimator $\eta$, not necessarily a~stopping time.

Throughout the paper the following notational conventions are adopted.
We use~$\eta$ to denote a function of the
observation process $Y=Y_0^\infty$. When~$\eta$ has no argument,
we mean that $\eta$ is a stopping time with
respect to $Y$. Instead, if $\eta$ has an
argument, we mean that $\eta$ is a function of its
argument which need not be a stopping time with
respect to $Y$. For
example, $\eta(Y_t)$ refers
to a function of observation~$Y_{t}$.

Further, we frequently omit arguments of functions (or estimators)
that appear in
expressions to be optimized. For instance, instead of
\[
\inf_{\eta(Y_t)}\ex
|\eta(Y_t)-\tau_\ell|^p ,
\]
we simply write
\[
\inf_{\eta(Y_t)}\ex|\eta-\tau_\ell|^p
\]
to denote an optimization over estimators of
$\tau_\ell$ that depend only on observation~$Y_t$.

Section~\ref{sec:mainresult} contains the main results and
Section~\ref{sec:proofs} is devoted to the proofs.

\section{Main results}\label{sec:mainresult}
Consider the discrete-time processes
\begin{eqnarray*}
X\dvtx&&\quad  X_0=0,\qquad  X_t=\sum_{i=1}^t V_i +s t,\qquad t\geq1, \\
Y\dvtx&&\quad   Y_0=0,\qquad  Y_t=X_t+\varepsilon\sum_{i=1}^t W_i,\qquad
   t\geq1,
\end{eqnarray*}
where $V_1,V_2,\ldots$ and $W_1,W_2,\ldots$ are two independent
sequences of independent standard (i.e., zero-mean unit variance)
Gaussian random variables, and where \mbox{$s\geq0$} and $\varepsilon\geq0$
are arbitrary constants.

Given the first-passage time
\[
\tau_\ell=\inf\{t\geq0\dvtx  X_t\geq\ell\}
\]
for some arbitrary known level $\ell\geq0$, we aim at finding a
stopping time
with respect to the observation process $Y$ that best tracks $\tau
_\ell$.
Specifically, we consider the optimization problem
\begin{equation}\label{oppb}
\inf_\eta\ex|\eta-\tau_\ell| ,
\end{equation}
where the infimum is over all stopping times $\eta$ defined
with respect to the natural filtration induced by the $Y$ process.

To avoid trivial situations, we restrict $\ell$ and $\varepsilon$ to
be strictly positive. When $\ell=0$ or $\varepsilon=0$, \eqref{oppb}
is equal to zero: for $\ell=0$, $\eta=0$ is optimal, and for
$\varepsilon=0$, $\eta=\tau_\ell$ is optimal.

Define the stopping time
\[
\etas\stackrel{\mathrm{def}}{=} \inf\{t\geq0\dvtx  \hat{X}_t\geq\ell\},
\]
where
\[
\hat{X}_0\defeq0\quad \mbox{and}\quad \hat{X}_t\defeq st+\frac{1}
{1+\varepsilon^2}(Y_t-st),  \qquad t\geq1,
\]
is the minimum mean square estimator of $X_t$ given observation $Y_t$.

The following theorem provides a nonasymptotic upper bound on
\eqref{oppb}:
\begin{thmm}[(Upper bound)]\label{ub} Given
$0<\varepsilon<\infty$, $0<s<\infty$ and $0<\ell<\infty$, we have
\begin{eqnarray}\label{upbest}
{\ex}|\etas-\tau_\ell| &\leq&\sqrt{\frac{{2\ell
\varepsilon
^{2}}}{{\pi
s^3(1+\varepsilon^2)}}}\nonumber\\
&&{}+
\sqrt{\frac{4\varepsilon^2}{s^2(1+\varepsilon^2)}}\Biggl[3
\biggl(\frac
{\ell}{2\pi
s^3}\biggr)^{1/4}
+3\sqrt{\frac{3}{s}}+ \sqrt{3s} +6\Biggr]\\
&&{}+ \frac{4}{s\sqrt{1+\varepsilon^{2}}}+\frac{4}{s}+4.\nonumber
\end{eqnarray}
\end{thmm}


The next theorem provides a nonasymptotic lower bound on
${\ex}|\eta(Y_0^\infty) -\tau_\ell|$ for any estimator $\eta
(Y_0^\infty
)$ of
$\tau_\ell$ that has access to the entire observation sequence
$Y_0^\infty$. The
function $Q(x)$ is defined as
\[
Q(x)\defeq\frac{1}{\sqrt{2\pi}}\int_x^\infty
\exp(-u^2/2)\,du .
\]

\begin{thmm}[(Lower bound)]\label{lb}
Given
$0<\varepsilon<\infty$, $0<s<\infty$ and $0<\ell<\infty$, and any
integer $n$ such that $1\leq n<\ell/s$,
\begin{eqnarray}\label{low}
\inf_{\eta(Y_0^{\infty})}{\ex}|\eta-\tau|&\geq&
\sqrt{\frac{2n\varepsilon^2}{\pi s^2(1+\varepsilon^2)}}\nonumber\\
&&{}-
\biggl({\frac{2n}{\pi^3s^6}}\biggr)^{1/4} -\sqrt{\frac{2(\ell
-sn)_+}{\pi s^3}}
- 2 - \frac{6}{s}\\
&&{}-(2n^{3/2}+n/s+n^{1/2}\ell/s)Q\bigl((\ell-sn)/\sqrt{n}\bigr)^{1/2} .\nonumber
\end{eqnarray}
\end{thmm}

When $n$ approaches $\ell/s$ and $\ell/s$ tends to infinity in a
suitable way, the upper and lower bounds \eqref{upbest} and
\eqref{low} become tight. The following result is an immediate
consequence of these bounds by considering $n$ of the form
$n=\lfloor\ell/s-(\ell/s)^q\rfloor$, $1/2<q<1$, in Theorem
\ref{lb}\footnote{$\lfloor x\rfloor$ denotes the largest integer not greater
than $x$.}:
\begin{thmm}[(Asymptotics)]\label{th3}
Let $q$ be a constant such that $1/2<q<1$. In the asymptotic
regime where
\begin{eqnarray*}
s\biggl(\frac{\ell}{s}\biggr)^{q-1/2}&\longrightarrow&
\infty ,\qquad \biggl(\frac{\ell}{s}\biggr)^{1-q}
\frac{\varepsilon^2}{1+\varepsilon^2} \longrightarrow\infty
,\\
s\ell\frac{\varepsilon^4}{(1+\varepsilon^2)^2}&\longrightarrow&
\infty ,
\end{eqnarray*}
we have
\begin{eqnarray}\label{uiz}
\inf_{\eta(Y_0^\infty)}{\ex}|\eta-\tau_\ell|&=&\bigl(1+o(1)\bigr){\ex
}|\etas
-\tau_\ell|
\nonumber
\\[-8pt]
\\[-8pt]
\nonumber
&=& \sqrt{ \frac{2\ell\varepsilon^2}{{\pi
s^3(1+\varepsilon^{2})}}} \bigl(1 + o(1)\bigr) .
\end{eqnarray}
In particular, the equalities in \eqref{uiz} hold in the limit $\ell
\rightarrow
\infty$ for fixed $0<\varepsilon<\infty$ and $0<s<\infty$.
\end{thmm}

Theorem~\ref{th3} says that the sequential
estimator $\eta^*_\ell$ does as well as the best
estimators with the
foreknowledge of the entire observation process
$Y$, asymptotically.\footnote{$\eta(Y_0^\infty)$ need not be a
stopping time according to our notational
conventions.} Part of the reason for this is
that $\tau_\ell$ concentrates around $\ell/s$.
Hence, restricting estimators to depend only on
finitely many observations induces no loss of
optimality, asymptotically.

Consider now the setting where $\sum_{i=1}^tV_i$ and
$\sum_{i=1}^tW_i$ are replaced by standard Wiener processes, that is,
with the $X$ and the $Y$ processes defined as
\begin{eqnarray*}
X\dvtx&&\quad  X_0=0,\qquad  X_t=B_t +s t\qquad  \mbox{for }   t> 0,\\
Y\dvtx&&\quad  Y_0=0, \qquad Y_t=X_t+\varepsilon N_t  \qquad\mbox{for }   t>0 ,
\end{eqnarray*}
where $\{B_t\}_{t> 0}$ and $\{N_t\}_{t> 0}$ are two independent
standard Wiener processes. The previous results easily extend to
the Wiener process setting. Indeed, the analysis is simpler than
for the Gaussian random walk setting as there is no excess over
the boundary (variously known as overshoot) for a Wiener process---the
value of a Wiener process the first
time it reaches a certain level is equal to this
level.

Theorems~\ref{b1},~\ref{b2} and~\ref{b3} are analogous to
Theorems~\ref{ub},~\ref{lb} and~\ref{th3}, respectively.
\begin{thmm}[(Upper bound, Wiener process)]\label{b1}
Given
$0<\varepsilon<\infty$, $0<s<\infty$ and $0<\ell<\infty$, we have
\begin{equation}
{\ex}|\etas-\tau_\ell| \leq \sqrt{\frac{{2\ell
\varepsilon
^{2}}}{{\pi
(1+\varepsilon^2)s^3}}}+
\sqrt{\frac{36\varepsilon^2}{(1+\varepsilon^2)s^2}}\biggl(\frac
{\ell
}{2\pi
s^3}\biggr)^{1/4} .
\end{equation}
\end{thmm}

\begin{thmm}[(Lower bound, Wiener process)]\label{b2}
Given
$0<\varepsilon<\infty$, $0<s<\infty$, $0<\ell<\infty$, and $n$
such that
$0<n<\ell/s$, we have
\begin{eqnarray*}
\inf_{\eta(Y_0^{\infty})}{\ex}|\eta-\tau|&\geq&
\sqrt{\frac{2n\varepsilon^2}{\pi s^2(1+\varepsilon^2)}}-
\biggl({\frac{2n}{\pi^3s^7}}\biggr)^{1/4} -\sqrt{\frac{2(\ell
-sn)_+}{\pi s^3}}
\\
&&{}-(2n^{3/2}+n/s+n^{1/2}\ell/s)Q\bigl((\ell-sn)/\sqrt{n}\bigr)^{1/2} .
\end{eqnarray*}
\end{thmm}

The following theorem is an immediate consequence of
Theorems~\ref{b1} and~\ref{b2}.
\begin{thmm}[(Asymptotics, Wiener process)]\label{b3}
Theorem ~\ref{th3} is also valid in the Wiener process setting.
\end{thmm}

When there is no drift, that is, $s=0$, it turns out that \eqref{oppb} is
infinite for all $\ell>0$ and $\varepsilon>0$.
In fact, Theorem~\ref{prop1} below, which is
valid in both the Gaussian random walk and the
Wiener process settings, provides a stronger
statement:
\begin{thmm}\label{prop1} Let $s=0$, $0<\varepsilon<\infty$ and
$\ell>0$, and let $f(x)$, $x \geq0$, be a nonnegative
and nondecreasing function such that
\begin{equation}\label{eqprop1}
\ex f(\tau_{h}/2) = \infty
\end{equation}
for some constant $h>0$. Then,
\begin{longlist}[(ii)]
\item[(i)]
$\ex f(|\tau_\ell- \eta(Y_{0}^{\infty})|) = \infty$ for any
estimator $
\eta(Y_{0}^{\infty})$.
\item[(ii)]
If $f(x) = x^{p}$, $p \geq1/2$, then \eqref{eqprop1} holds for
all $h>0$. Hence,
\[
\ex
|\tau_\ell- \eta|^p=\infty
\]
for any estimator $
\eta(Y_{0}^{\infty})$ of $\tau_\ell$ whenever $p\geq1/2$.
\end{longlist}
\end{thmm}

A heuristic justification for Theorem~\ref{prop1}, claim (ii) is as
follows. When $s=0$,
$\ex\tau_\ell=\infty$ for any $\ell>0$. So, when $s=0$, it is
likely that
$\tau_\ell$ takes
some very large value. When this happens, the
estimate of $\tau_\ell$ is poor because of
the noise in the observation process whose
variance grows proportionally with time.

\section{Proofs of results}\label{sec:proofs}
In this section we prove Theorems~\ref{ub},~\ref{lb} and
\ref{prop1}. Theorems~\ref{b1} and~\ref{b2} are proved
in the same way as Theorems~\ref{ub} and~\ref{lb} by merely
ignoring overshoots.

The proofs of Theorems~\ref{b1} and~\ref{b2} are
therefore omitted.

In this section, $V$ and $W$ always denote
standard Gaussian random variables.

Before proving Theorems~\ref{ub},~\ref{lb} and~\ref{prop1}, we
establish a few auxiliary results related to overshoot estimates. These
results are based on the following theorem, given in~\cite{Mog1}, Theorem 2, equation~$(7)$,
which provides an upper bound on overshoot which is uniform
in the threshold level $\ell$.
\begin{thmm}[(\cite{Mog1})]\label{mogu}
Let $Z_{1},Z_{2},\ldots$ be i.i.d. random variables such that $\ex
Z_1 \geq0$. Define $S_{t} = Z_{1}+Z_2 + \cdots+ Z_{t}$, $\mu_\ell=
\inf\{t\geq1\dvtx  S_{t} \geq\ell\}$, and the overshoot $O_{\mu_\ell} =
S_{\mu_\ell} -\ell$.
Then,
\[
\sup_{\ell\geq0}{\ex}(O_{\mu_\ell}^{p}) \leq\frac{2(p+2)}{(p+1)}
\frac{{\ex}|Z_{1}|^{p+2}}{\ex(Z_{1}^{2})} \qquad \mbox{for all }
 p
> 0.
\]
\end{thmm}

Overshoot has been extensively studied and various other bounds
have been exhibited (see, e.g.,~\cite{Lo1,Gut,Ch}). However, to the
best of our knowledge, the bound given by Theorem~\ref{mogu} has not
been improved for
all $s\geq0$ and $p>0$. In particular, it is tighter than
Lorden's bound~\cite{Lo1} for small values of~$s$.

\begin{cor}\label{corwa}
Let $Z_{1},Z_{2},\ldots$ be i.i.d. random variables according to a
mean $s> 0$ and variance $\sigma^2\geq0$ Gaussian
distribution, and let $S_{t}$, $\mu_\ell$ and~$O_{\mu_\ell}$ be
defined as in Theorem~\ref{mogu}. Then,
\begin{equation}
\sup_{\ell\geq0}{\ex}(O_{\mu_\ell}) \leq2s + 4\sigma ,
\label{mog1g}
\end{equation}
and
%
\begin{equation}\label{stat1}
\frac{\ell}{s}\leq\frac{1}{s}\ex S_{\mu_\ell}=\ex\mu_\ell
\leq\frac{\ell}{s}+2+\frac{4 \sigma}{s} .\vspace*{-2pt}
\end{equation}
\end{cor}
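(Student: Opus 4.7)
The plan is to derive both bounds from Theorem~\ref{mogu} combined with Wald's identity. First I would prove the overshoot bound~\eqref{mog1g}; the second bound~\eqref{stat1} then falls out of a short Wald argument.

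To establish \eqref{mog1g}, I would invoke Theorem~\ref{mogu} with $p=2$, which yields
\[
\sup_{l\geq 0}\ex R_{\mu_l}^{2}\ \leq\ \frac{8}{3}\,\frac{\ex|Z_{1}|^{4}}{\ex Z_{1}^{2}}.
\]
For $Z_{1}\sim N(s,\sigma^{2})$ the familiar Gaussian moment formulas give $\ex Z_{1}^{2}=s^{2}+\sigma^{2}$ and $\ex Z_{1}^{4}=s^{4}+6s^{2}\sigma^{2}+3\sigma^{4}$. Combining with Jensen's inequality $\ex R_{\mu_l}\leq(\ex R_{\mu_l}^{2})^{1/2}$, it suffices to verify the polynomial inequality
\[
\tfrac{8}{3}\bigl(s^{4}+6s^{2}\sigma^{2}+3\sigma^{4}\bigr)\ \leq\ (2s+4\sigma)^{2}(s^{2}+\sigma^{2}).
\]
Expanding both sides, this reduces to the non-negativity of
$\tfrac{4}{3}s^{4}+16s^{3}\sigma+4s^{2}\sigma^{2}+16s\sigma^{3}+8\sigma^{4}$,
which is clear since $s,\sigma\geq 0$.

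For \eqref{stat1}, I would apply Wald's identity: when $s>0$ the stopping time $\mu_{l}$ has $\ex\mu_{l}<\infty$ and $\ex S_{\mu_{l}}=s\,\ex\mu_{l}$. The definition of $\mu_{l}$ gives $S_{\mu_{l}}\geq l$ and hence $l\leq s\,\ex\mu_{l}$, while the overshoot estimate already proved yields
$s\,\ex\mu_{l}=\ex S_{\mu_{l}}=l+\ex R_{\mu_{l}}\leq l+2s+4\sigma$.

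The one subtle point, and what I take to be the main obstacle, is the choice $p=2$ above. The more natural instantiation $p=1$ of Theorem~\ref{mogu} gives the bound $3\,\ex|Z_{1}|^{3}/\ex Z_{1}^{2}$, which one can check is \emph{strictly larger} than $2s+4\sigma$ in some regimes (for instance at $\sigma=0$ it yields $3s>2s$). Working instead with the fourth moment and paying the Jensen step is the cleanest route, because for Gaussians $\ex Z_{1}^{4}$ is a transparent polynomial in $s^{2}$ and $\sigma^{2}$ and the resulting inequality can then be verified by inspection.
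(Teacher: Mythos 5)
Your proposal is correct and follows essentially the same route as the paper: apply Theorem~\ref{mogu} with $p=2$, use the Gaussian fourth moment $\ex Z_1^4=s^4+6s^2\sigma^2+3\sigma^4$, pass through Jensen ($\ex R_{\mu_l}\leq(\ex R_{\mu_l}^2)^{1/2}$), verify the resulting algebraic inequality, and then deduce \eqref{stat1} from Wald's equation together with $l\leq S_{\mu_l}\leq l+R_{\mu_l}$. Your observation that the $p=1$ instantiation gives only $3s$ at $\sigma=0$ and so fails, forcing the $p=2$ route, is a correct diagnosis of why the extra Jensen step is the right price to pay; the paper takes exactly this route, merely phrasing the intermediate bound as $\tfrac{8}{3}[s^2+5\sigma^2-2\sigma^4/(s^2+\sigma^2)]$ rather than cross-multiplying as you do.
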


\begin{pf}
Since
\[
{\ex}(Z_1)^{2} = s^{2} + \sigma^{2} \quad \mbox{and}\quad
{\ex}|Z_1|^{4} = {\ex}(s+\sigma V)^{4} = s^{4} + 6s^{2}\sigma^{2}
+ 3\sigma^{4}  ,
\]
we have
\[
\sup_{\ell\geq0}{\ex}(O_{\mu_\ell}^{2}) \leq\frac{8}{3}\biggl[
s^{2} +
5\sigma^{2} - \frac{2\sigma^{4}}{s^{2} + \sigma^{2}}\biggr],
\]
from Theorem~\ref{mogu} with $p = 2$. Therefore,
\begin{eqnarray*}
\sup_{\ell\geq0}{\ex}(O_{\mu_\ell}) & \leq&
\sqrt{\sup_{\ell\geq0}{\ex}(O_{\mu_\ell}^{2})} \\
&\leq&\sqrt{\frac{8}{3}\biggl[ s^{2} + 5\sigma^{2} -
\frac{2\sigma^{4}}{s^{2} +
\sigma^{2}}\biggr]}\\
&\leq&2s + 4\sigma ,
\end{eqnarray*}
which gives \eqref{mog1g}.

Now $\ex S_{\mu_\ell}= s \ex\mu_\ell$ by Wald's
equation since $0<s<\infty$ and $\ex\mu_\ell<\infty$. Hence, since
\[
\ell\leq\ex
S_{\mu_\ell}\leq\ell+\sup_{\ell\geq
0}{\ex}(O_{\mu_\ell}) ,
\]
inequality \eqref{stat1} follows from~\eqref{mog1g}.
\end{pf}

\begin{lem} \label{stat2cc} $\!\!\!$The following inequalities hold for all
$0\!<\!s\!<\!\infty$ and \mbox{$0\!<\!\ell\!<\!\infty$}:
\begin{eqnarray}\label{ihwi}
\ex(\ell/s - \tau_\ell)_{+} &\leq&\ex(\tau_\ell- \ell/s)_{+}
\leq
\sqrt{\frac{\ell}{{2\pi s^3}}} + 1 + \frac{3}{s} ,
\\
\label{stat2c}
{\ex}|\tau_\ell- \ell/s| &\leq&\sqrt{\frac{{2\ell
}}{{\pi
s^3}}} +
2 + \frac{6}{s} ,
\\
\label{stat2e}
{\ex}(X_{\tau_\ell} - s\tau_\ell)_{+} &\leq&
\sqrt{\frac{\ell}{2\pi s}} + 3s
+ 7.
\end{eqnarray}
\end{lem}

\begin{pf}
From Wald's equation $\ex X_{\tau_\ell}= s
\ex\tau_\ell$, since $0<s<\infty$ and $\ex\tau_\ell<\infty$, hence
$\ell\leq\ex X_{\tau_\ell}= s
\ex\tau_\ell$. Therefore, using the identity
$x=x_+-(-x)_+$,\footnote{$x_+\defeq\max\{0,x\}$.} we get
\[
0\leq{\ex}(\tau_\ell- \ell/s) = {\ex}(\tau_\ell- \ell/s)_{+}
- {\ex}(\ell/s - \tau_\ell)_{+} ,
\]
that is,
\begin{equation}\label{stat2b}
\ex(\ell/s - \tau_\ell)_{+} \leq\ex(\tau_\ell- \ell/s)_{+}  .
\end{equation}
We upper bound the right-hand side of \eqref{stat2b} as\footnote
{$\lceil x\rceil$
denotes the smallest integer not smaller
than $x$.}
\begin{eqnarray}\label{aga0}
{\ex}(\tau_\ell- \ell/s)_{+} &\leq&{\ex}(\tau
_\ell-
\cls)_{+}+1\nonumber\\
&=& {\ex}\bigl(\tau_\ell- \cls; \tau_\ell>\cls,X_\cls<\ell
\bigr)+1
\nonumber
\\[-8pt]
\\[-8pt]
\nonumber
&=& {\ex}\bigl(\nu_\ell- \cls; X_{\cls}< \ell\bigr)+1
\\
&=& {\ex}(\nu_G; G>0)+1 ,\nonumber
\end{eqnarray}
where $\nu_\ell\defeq\inf\{t\geq\cls\dvtx X_t\geq
\ell\}$ and $G \defeq\ell
-X_{\cls}$.

Since $G\leq-\sum_{i=1}^{\cls}V_{i}\stackrel{\mathrm{d}}{=}\sqrt
{\cls}
V$, using
equation \eqref{stat1} of Corollary~\ref{corwa} with $\sigma^2=1$ yields
\begin{eqnarray}\label{aga1}
{\ex}(\nu_{G}; G> 0)&\leq&{\ex} \biggl[\frac{G}{s}
+ 2 + \frac{4}{s};G > 0
\biggr] \nonumber\\
&\leq&{\ex} \biggl[\frac{\sqrt{\lceil\ell/s\rceil}V}{s}
+ 2 + \frac{4}{s};V > 0
\biggr]
\nonumber
\\[-8pt]
\\[-8pt]
\nonumber
&\leq&\sqrt{\frac{\cls}{s^{2}}}{\ex}(V)_{+} +1 + \frac
{2}{s}\\
&\leq&\sqrt{\frac{{\ell}}{{2\pi s^3}}} + 1 + \frac{3}{s}.\nonumber
\end{eqnarray}
From \eqref{stat2b}, \eqref{aga0} and \eqref{aga1} we get
\begin{equation}\label{ihw}
\ex(\ell/s - \tau_\ell)_{+} \leq\ex(\tau_\ell- \ell/s)_{+}
\leq
\sqrt{\frac{\ell}{{2\pi s^3}}} + 1 + \frac{3}{s} ,
\end{equation}
which gives \eqref{ihwi}.

Inequality \eqref{stat2c} is an immediate consequence of
\eqref{ihwi}.

Since $X_{\tau_\ell}\geq\ell$, we have
\[
{\ex}(X_{\tau_\ell}/s - \tau_\ell)_{+} \leq
{\ex}(X_{\tau_\ell}/s -\ell/s) + {\ex}(\ell/s -
\tau_\ell)_{+} .
\]
This, together with \eqref{ihw} and the
inequality
\begin{equation}\label{stat2f}
{\ex}(X_{\tau_\ell}/s - \ell/s) \leq2 + 4/s
\end{equation}
obtained from equation~\eqref{stat1} of
Corollary~\ref{corwa}, establishes \eqref{stat2e}.
\end{pf}

\begin{pf*}{Proof of Theorem~\protect\ref{ub}}
We prove Theorem~\ref{ub} by considering estimators of the form
\[
\eta^{(c)}=\inf\bigl\{t\geq1\dvtx  \hat{X}_{t}^{(c)} \geq
\ell\bigr\}  ,
\]
where $\hat{X}$ is defined as
\[\hat{X}_{0}^{(c)}=0,\qquad \hat{X}_{t}^{(c)} = st +
c(Y_{t} -
st) = st + c\Biggl[\sum_{i=1}^{t} V_{i} + \varepsilon
\sum_{i=1}^{t}W_{i}\Biggr],\qquad
t\geq1,
\]
for some constant $c\geq0$. We upper bound $\ex|\eta^{(c)}-\tau
_\ell|$,
$c\geq0$, and show that the optimal value of $c$ is
$1/(1+\varepsilon^2)$, which shall prove the theorem.

For $c = 0$, we have $\eta^{(0)}= \cls$, and equation~\eqref{stat2c}
of Lemma~\ref{stat2cc} gives
\begin{equation}\label{0estim}
{\ex}\bigl| \eta^{(0)}-\tau_\ell\bigr| \leq
\sqrt{\frac{2{\ell}}{{\pi
s^3}} }+ 3+ \frac{6}{s}.
\end{equation}

We now bound $\ex|\eta^{(c)}-\tau_\ell|$ for arbitrary values of
$c\geq0$. Since
\[
|x|=2x_+-x ,
\]
we have
\begin{equation}\label{piz}
{\ex}\bigl|\eta^{(c)} - \tau_\ell\bigr| = 2{\ex}\bigl(\eta
^{(c)} -
\tau_\ell\bigr)_{+} - {\ex}\bigl(\eta^{(c)} - \tau_\ell\bigr).
\end{equation}
Applying equation~\eqref{stat2c} of Corollary~\ref{corwa} to $\tau
_\ell
$ and $\eta$ yields
\[
\label{nuA}
{\ex}\bigl(\eta^{(c)} - \tau_\ell\bigr)\geq-2-\frac{4}{s}  ,
\]
hence from~\eqref{piz}
%
\begin{equation}\label{genup1}
{\ex}\bigl|\eta^{(c)} - \tau_\ell\bigr| \leq2{\ex}\bigl(\eta^{(c)}
- \tau_\ell\bigr)_{+} + 2+\frac{ 4}{s}.
\end{equation}
Below, we upper bound $\ex(\eta^{(c)}-\tau_\ell)_+$ then use
\eqref{genup1} to deduce a bound on ${\ex}|\eta^{(c)} -
\tau_\ell|$.

For notational convenience, throughout the
calculations we some-\break times~omit~the superscript ${(c)}$ and simply write $\hat{X}_t$ and
$\eta$ in place of $\hat{X}_t^{(c)}$ and~$\eta^{(c)}$.

Let us introduce the auxiliary stopping time
\[
\nu\defeq\inf\{t \geq\tau_\ell\dvtx  \hat{X}_{t} \geq\ell\}.
\]
It follows that
\begin{eqnarray}
\label{expaa} {\ex}(\eta- \tau_\ell)_{+} &\leq&
{\ex}(\nu- \tau_\ell; \eta> \tau_\ell) \nonumber\\
&\leq&{\ex}(\nu- \tau_\ell; \hat{X}_{\tau_\ell}\leq\ell
)\\
&=&\frac{1}{s}{\ex}(\hat{X}_{\nu} - \hat{X}_{\tau_\ell};
\hat{X}_{\tau_\ell}\leq\ell),\nonumber
\end{eqnarray}
where the second inequality holds since $\{\eta> \tau_\ell\}
\subseteq
\{\hat{X}_{\tau_\ell}\leq\ell\}$ and where for the last equality we
used Wald's
equation since $0<s<\infty$ and both $\nu$ and~$\tau_\ell$ have finite expectation.

Since the random walk $\hat{X}$ has incremental steps with mean $s$
and variance
$c^2(1+\varepsilon^2)$, from
equation~\eqref{mog1g} of
Corollary~\ref{corwa} and the strong Markov property of
$\hat{X}$ at time $\tau_\ell$, we get
\begin{eqnarray*}
{\ex}(\hat{X}_{\nu} - \hat{X}_{\tau_\ell}; \hat{X}_{\tau
_\ell}\leq
\ell
) &\leq&{\ex} \bigl[\ell+ 2s +
4c\sqrt{1+\varepsilon^{2}}-\hat{X}_{\tau_\ell}; \hat{X}_{\tau
_\ell}
\leq\ell\bigr]\\
&\leq&{\ex} \bigl[{X}_{\tau_\ell} + 2s +
4c\sqrt{1+\varepsilon^{2}}-\hat{X}_{\tau_\ell}; \hat{X}_{\tau
_\ell}
\leq X_{\tau_\ell}\bigr]\\
&\leq&{\ex} ({X}_{\tau_\ell}
-\hat{X}_{\tau_\ell})_++s + 2c\sqrt{1+\varepsilon^{2}} ,
\end{eqnarray*}
hence from \eqref{expaa}
\begin{equation}\label{bg}
\ex\bigl(\eta^{(c)} - \tau_\ell\bigr)_{+}\leq\frac{1}{s}{\ex}
\bigl({X}_{\tau_\ell}^{(c)} -\hat{X}_{\tau_\ell}\bigr)_++ \frac{s +
2c\sqrt{1+\varepsilon^{2}}}{s}.
\end{equation}
Before we compute an upper bound on
$\ex(X_{\tau_\ell}-\hat{X}_{\tau_\ell}^{(c)})_+$ for general
values of
$c\geq0$, we consider the
case $c=1$.

\textit{Case} $c=1$: We have $\hat{X}_{t}^{(1)} = Y_{t}$ and
$\eta^{(1)} = \inf\{t\geq0\dvtx  Y_{t} \geq\ell\}$.
Since
$Y_t\stackrel{\mathrm{d}}{=} X_t+\varepsilon\sqrt{t}W$ with $W$
independent of $X_t$, it follows that
\begin{eqnarray}\label{bhu}
{\ex} ({X}_{\tau_\ell} -\hat{X}_{\tau_\ell})_+&=&\ex\bigl(\varepsilon
\sqrt{\tau_\ell}W\bigr)_+\nonumber\\
&=&\varepsilon\ex\bigl(\sqrt{\tau_\ell}\bigr)\ex(W)_+\nonumber\\
&=&\frac{\varepsilon}{\sqrt{2\pi}}\ex\bigl(\sqrt{\tau_\ell}\bigr)
\\
&\leq&\frac{\varepsilon}{\sqrt{2\pi}}\sqrt{\ex({\tau_\ell
})}\nonumber\\
&\leq&\frac{\varepsilon}{\sqrt{2\pi}}\sqrt{\frac{\ell+2s+4}{s}},\nonumber
\end{eqnarray}
where for the first inequality we used Jensen's inequality, and
where the second inequality follows from equation \eqref{stat1}
of Corollary~\ref{corwa}.

Combining~\eqref{bhu} with \eqref{bg} ($c=1$) yields
\[
{\ex}\bigl(\eta^{(1)} - \tau_\ell\bigr)_{+}\leq
\varepsilon\sqrt{\frac{\ell+ 2s
+ 4}{2\pi s^3} }+\frac{s+2\sqrt{1+\varepsilon^{2}}}{s}
\]
which, together with \eqref{genup1}, gives
\begin{equation}\label{1stestim}
{\ex}\bigl|\eta^{(1)} -\tau_\ell\bigr| \leq2\varepsilon\sqrt
{\frac
{\ell+ 2s
+ 4}{2\pi s^3} } +
\frac{4(s+1+\sqrt{1+\varepsilon^{2}})}{s}.
\end{equation}
Comparing \eqref{1stestim} with \eqref{0estim}, we note that for
fixed $s>0$, if $\varepsilon\ll1$, then ${\ex}|\eta^{(1)}
-\tau_\ell| \ll{\ex}|\eta^{(0)} -\tau_\ell|$
for large
values of~$\ell$.

\textit{General case} $c\geq0$: We compute a general
upper bound on $\ex({X}_{\tau_\ell} -\hat{X}_{\tau_\ell}^{(c)})_+$,
$c\geq0$, and use \eqref{genup1} and \eqref{bg} to obtain an upper bound
on $\ex|\eta^{(c)}-\tau_\ell|$.

Let $U_i$ be the increment of the
random walk $Z_t={X}_t-\hat{X}_t^{(c)}$, that is,
\[
U_i=Z_i-Z_{i-1}=
(1-c)V_{i}-c\varepsilon W_{i}.
\]
Given the fixed time horizon $m =
\bls$, we have
\begin{equation}\qquad
{X}_{\tau_\ell}-\hat{X}_{\tau_\ell}^{(c)} = \sum_{i=1}^{m}U_{i} -
\openone\{\tau_\ell<m\}\sum_{i=\tau_\ell+1}^{m}U_{i}+
\openone\{\tau_\ell>m\} \sum_{i=m+1}^{\tau_\ell}U_{i}  ,
\end{equation}
and therefore
\begin{eqnarray}\label{sumpo}
\ex\bigl( {X}_{\tau_\ell}- \hat{X}_{\tau_\ell}^{(c)}\bigr)_+ &\leq&
\ex\Biggl(\sum_{i=1}^{m}U_{i}\Biggr)_+ + \ex\Biggl( -
\openone\{\tau_\ell<m\}\sum_{i=\tau_\ell+1}^{m}U_{i}
\Biggr)_+
\nonumber
\\[-8pt]
\\[-8pt]
\nonumber
&&{}+\ex\Biggl( \openone\{\tau_\ell>m\} \sum_{i=m+1}^{\tau_\ell
}U_{i}\Biggr)_+.
\end{eqnarray}
We bound each term on the right-hand side of \eqref{sumpo}. For the
first term, since $\sum_{i=1}^{m}U_{i} \stackrel{\mathrm{d}}{=}
\sqrt{m[(1-c)^{2} + c^{2}\varepsilon^{2}]}V$, we have
\begin{eqnarray}\label{ugo3}
{\ex}\Biggl(\sum_{i=1}^{m}U_{i}\Biggr)_{+} &=& \sqrt{m[(1-c)^{2} +
c^{2}\varepsilon^{2}]}\ex(V)_+
\nonumber
\\[-8pt]
\\[-8pt]
\nonumber
&= &\sqrt{\frac{m[(1-c)^{2} + c^{2}\varepsilon^{2}]}{2\pi}}\leq\sqrt{\frac{\ell[(1-c)^{2} + c^{2}\varepsilon^{2}]}{2\pi s
}} .
\end{eqnarray}
For the second term on the right-hand side of \eqref{sumpo}, since
$\tau_\ell$ is independent of $U_{\tau_\ell+1},U_{\tau_\ell
+2},\ldots,$ we have
\begin{eqnarray}\label{dzp}
\ex\Biggl( - \openone\{\tau_\ell<m\}\sum_{i=\tau_\ell
+1}^{m}U_{i}\Biggr)_+ &=&
{\ex}\bigl[\sqrt{(m- \tau_\ell)_{+}[(1-c)^{2} +
c^{2}\varepsilon^{2}]} V_{+} \bigr]\nonumber\\
& =& \sqrt{\frac{{(1-c)^{2} + c^{2}\varepsilon^{2}}}{{2\pi}}}
{\ex}\sqrt{(m- \tau_\ell)_{+}}
\nonumber
\\[-8pt]
\\[-8pt]
\nonumber
& \leq&\sqrt{\frac{[(1-c)^{2} + c^{2}\varepsilon^{2}]}{2\pi}
{\ex}(m- \tau_\ell)_{+}} \\
&\leq&\sqrt{\frac{[(1-c)^{2} + c^{2}\varepsilon^{2}]}{2\pi}
\Biggl[\sqrt{\frac{\ell}{{2\pi s^3}}} + 1 + \frac{3}{s}\Biggr]},\nonumber
\end{eqnarray}
where the first inequality holds by Jensen's inequality and where
the last inequality follows from
equation~\eqref{ihwi} of Lemma~\ref{stat2cc}.

For the third term on the right-hand side of \eqref{sumpo}, we have
\begin{eqnarray}\label{gk}\qquad
\ex\Biggl( \openone\{\tau_\ell>m\} \sum_{i=m+1}^{\tau_\ell
}U_{i}\Biggr)_+
&\leq&
c\varepsilon{\ex}
\Biggl(\openone\{\tau_\ell>m\}\sum_{i=m+1}^{\tau_\ell}W_{i}
\Biggr)_{+}
\nonumber
\\[-8pt]
\\[-8pt]
\nonumber
&&{}+(1-c)_+{\ex}
\Biggl(\openone\{\tau_\ell>m\}\sum_{i=m+1}^{\tau_\ell}V_{i}\Biggr)_{+}.
\end{eqnarray}
Since $\tau_\ell$ and $\{W_{i}\}$ are independent, we have
\[
\openone\{\tau_\ell>n\}\sum_{i=m+1}^{\tau_\ell}W_{i} \stackrel
{\mathrm{d}}{=}
\sqrt{(\tau_\ell-m)_{+}}W ,
\]
and a similar calculation as
for~\eqref{dzp} shows that
\begin{equation}\label{gk1}
{\ex} \Biggl[\openone\{\tau_\ell>m\}\sum_{i=m+1}^{\tau_\ell
}W_{i}\Biggr]_{+}
\leq\sqrt{\frac{1}{2\pi} \Biggl[\sqrt{\frac{\ell}{{2\pi s^3}}} + 2
+ \frac{3}{s}\Biggr]}.
\end{equation}
We now focus on the second expectation on the right-hand side
of~\eqref{gk}. Note first that, on $\{\tau_\ell> m\}$, we have
\[
\sum_{i=m+1}^{\tau_\ell}V_{i} = (X_{\tau_\ell} - X_{m}) - s(\tau
_\ell- m).
\]
Therefore, to bound $ {\ex}
(\openone\{\tau_\ell>m\}\sum_{i=m+1}^{\tau_\ell}V_{i}
)_{+}$, we
consider the ``shifted'' sequence $\{S_{t} = X_{t} - X_{m}\}_{t\geq
m}$, and its crossing of level $\ell- X_m$. Using \eqref{stat2e}
(with $\ell- X_{m}$ instead of $\ell$) we have
\begin{eqnarray}\label{gk3}
&&{\ex} \Biggl(\openone\{\tau_\ell>m\}\sum_{i=n+1}^{\tau_\ell
}V_{i}\Biggr)_{+}\nonumber\\
&&\qquad\leq{\ex}\bigl([X_{\tau_\ell} - X_{m} - s(\tau_\ell-
m)]_{+};X_{m} \leq
\ell\bigr)\nonumber\\
&&\qquad \leq{\ex} \sqrt{\frac{(\ell- X_{m})_{+}}{2\pi s}} + 3s + 7
\\
&&\qquad\leq\sqrt{\frac{{\ex}(\ell- X_m)_{+}}{2\pi s}} + 3s + 7
\nonumber\\
&&\qquad\leq\frac{\ell^{1/4}}{(2\pi s)^{3/4}} +\frac{1}{\sqrt{2\pi s}}+
3s +
7 ,\nonumber
\end{eqnarray}
where the third inequality follows from Jensen's inequality.
Combining~\eqref{gk} together with \eqref{gk1} and \eqref{gk3}
yields
\begin{eqnarray}\label{gk2}
&&\ex\Biggl( \openone\{\tau_\ell>m\} \sum_{i=m+1}^{\tau_\ell
}U_{i}
\Biggr)_+\nonumber\\
&&\qquad\leq
c\varepsilon\sqrt{\frac{1}{2\pi}
\Biggl[\sqrt{\frac{\ell}{{2\pi s^3}}} + 2 + \frac{3}{s}\Biggr]}
\\
&&\qquad\quad{}+(1-c)_+ \biggl(\frac{\ell^{1/4}}{(2\pi s)^{3/4}} +\frac{1}{\sqrt
{2\pi
s}}+ 3s + 7\biggr) ,\nonumber
\end{eqnarray}
and from \eqref{bg}, \eqref{sumpo}--\eqref{dzp} and
\eqref{gk2}, we get
\begin{eqnarray}\label{zh}
{\ex}\bigl(\eta^{(c)} - \tau_\ell\bigr)_{+} & \leq&
\sqrt{\frac{{\ell[(1-c)^{2} + c^{2}\varepsilon^{2}]}}{{2\pi
s^3}}}+c\varepsilon\sqrt{\frac{1}{2\pi s^2}
\Biggl[\sqrt{\frac{\ell}{{2\pi s^3}}} + 2 + \frac{3}{s}
\Biggr]}\nonumber
\\
&&{}+ \sqrt{\frac{[(1-c)^{2} + c^{2}\varepsilon^{2}]}{2\pi s^2}
\Biggl[\sqrt{\frac{\ell}{{2\pi s^3}}} + 1 + \frac{3}{s}\Biggr]}
\nonumber
\\[-8pt]
\\[-8pt]
\nonumber
&&{}+
\frac{(1-c)_+}{s}\biggl[\frac{\ell^{1/4}}{(2\pi s)^{3/4}} + \frac
{1}{\sqrt{2\pi s}}+3s + 7\biggr]\\
&&{}+1 + \frac{2c\sqrt{1+\varepsilon^{2}}}{s}.\nonumber
\end{eqnarray}
To minimize the first term on the right-hand side
of~\eqref{zh} (which is the dominant term as a
function of $\ell$), we set
$c = \bar{c}=1/(1+\varepsilon^{2})$ so as to minimize the factor
$(1-c)^{2} + c^{2}\varepsilon^{2}$. With $c=\bar{c}$ we have
$(1-c)^{2} + c^{2}\varepsilon^{2} =
\varepsilon^{2}/(1+\varepsilon^{2})$ and
$\eta^{(\bar{c})}=\etas$, hence, from \eqref{zh},
\begin{eqnarray*}
{\ex}(\etas- \tau_\ell)_{+} & \leq&
\sqrt{\frac{{\ell\varepsilon^{2}}}{{2\pi
(1+\varepsilon^2)s^3}}}+\frac{\varepsilon}{1+\varepsilon^2}
\sqrt{\frac{1}{2\pi s^2}
\Biggl[\sqrt{\frac{\ell}{{2\pi s^3}}} + 2 + \frac{3}{s}
\Biggr]}
\\
&&{}+ \sqrt{\frac{\varepsilon^{2}}{2\pi(1+\varepsilon^2)s^2}
\Biggl[\sqrt{\frac{\ell}{{2\pi s^3}}} + 1 + \frac{3}{s}\Biggr]}
\\
&&{}+
\frac{\varepsilon^2}{s(1+\varepsilon^2)}\biggl[\frac{\ell
^{1/4}}{(2\pi s)^{3/4}}
+ \frac{1}{\sqrt{2\pi s}}+3s + 7\biggr]\\
&&{}+1 + \frac{2}{s\sqrt{1+\varepsilon^{2}}}.
\end{eqnarray*}
Combining the second, third and fourth terms on the right-hand side of
the above
inequality, we get
\begin{eqnarray}\label{zh2}
{\ex}(\etas- \tau_\ell)_{+} & \leq&
\sqrt{\frac{{\ell\varepsilon^{2}}}{{2\pi
(1+\varepsilon^2)s^3}}}\nonumber\\
&&{}+
\frac{\varepsilon}{s\sqrt{1+\varepsilon^2}}\Biggl[3\biggl(\frac
{\ell}{2\pi
s^3}\biggr)^{1/4}
+3\sqrt{\frac{3}{s}}+ \sqrt{3s} +6\Biggr]\\
&&{}+ \frac{2}{s\sqrt{1+\varepsilon^{2}}}+1.\nonumber
\end{eqnarray}
Finally, combining \eqref{zh2} with \eqref{genup1} yields
\begin{eqnarray*}
{\ex}|\etas- \tau_\ell| &\leq&\sqrt{\frac{{2\ell\varepsilon
^{2}}}{{\pi
(1+\varepsilon^2)s^3}}}\nonumber\\
&&{}+
\frac{2\varepsilon}{s\sqrt{1+\varepsilon^2}}\Biggl[3\biggl(\frac
{\ell
}{2\pi
s^3}\biggr)^{1/4}
+3\sqrt{\frac{3}{s}}+ \sqrt{3s} +6\Biggr]\nonumber\\
&&{}+ \frac{4}{s\sqrt{1+\varepsilon^{2}}}+\frac{4}{s}+4,
\end{eqnarray*}
from which Theorem~\ref{ub} follows.
\end{pf*}

\begin{pf*}{Proof of Theorem~\protect\ref{lb}}
We prove Theorem~\ref{lb} by establishing a~lower bound on
${\ex}|\eta({Y_0^\infty})-\tau_\ell|$ for any estimator $\eta
({Y_0^\infty})$
that has access to the entire observation process $Y_0^\infty$.

Pick an arbitrary integer $n$ such that $1\leq n<\ell/s$. Then, we have
\begin{eqnarray}
\label{low31} \inf_{\eta(Y_0^{\infty})}{\ex}|\eta
-\tau_\ell| &=&\inf_{\eta(Y_0^{\infty})}{\ex}
\biggl|\biggl(\eta- n - \frac{\ell-X_{n}}{s}\biggr) +
\biggl(n+ \frac{\ell-X_{n}}{s} - \tau_\ell\biggr)\biggr|\nonumber
\\
& \geq&\inf_{\eta(Y_0^{\infty})}{\ex}
\biggl|\eta- n - \frac{\ell-X_{n}}{s}\biggr|- {\ex}\biggl|n+ \frac
{\ell
-X_{n}}{s} -
\tau_\ell\biggr| \\
&=& \frac{1}{s}\inf_{\eta(Y_0^{\infty})}{\ex}
|\eta- X_{n}| - {\ex}\biggl|n +
\frac{\ell-X_{n}}{s} - \tau_\ell\biggr|.\nonumber
\end{eqnarray}

The first expectation on the right-hand side of
\eqref{low31} is lower bounded as follows. Since $X_n$ and $ Y_n$ are
jointly Gaussian, we may represent $X_n$ as
\[
X_n\stackrel{\mathrm{d}}{=}
\sqrt{n\varepsilon^2/(1+\varepsilon^2)} V+c\cdot Y_n + d ,
\]
where $V$ is a standard Gaussian random variable independent of
$\{Y_n\}$, and where $c$ and $d$ are (nonnegative) constants (that
depend on $s$ and $\varepsilon$). Using this alternative
representation of $X_n$ yields
\begin{eqnarray}\label{gag3}
\inf_{\eta(Y_0^{\infty})}{\ex} |\eta-
X_{n}|&=&\inf_{\eta(Y_0^{\infty})}{\ex}
\bigl|\eta- c\cdot Y_n -
d-\sqrt{n\varepsilon^2/(1+\varepsilon^2)}
V\bigr|\nonumber\\
&=&\sqrt{\frac{n\varepsilon^2}{1+\varepsilon^2}}\inf_{\eta
(Y_0^{\infty})}{\ex}
|\eta- V|\nonumber\\
&=&\sqrt{\frac{n\varepsilon^2}{1+\varepsilon^2}}\inf_{e}{\ex}
|e - V|\\
&=&\sqrt{\frac{n\varepsilon^2}{1+\varepsilon^2}}{\ex}
|V|\nonumber\\
&=&\sqrt{\frac{2n\varepsilon^2}{\pi(1+\varepsilon^2)}} ,\nonumber
\end{eqnarray}
where the infimum on the right-hand side of the third equality is over
constant estimators (i.e., independent of
$Y_0^\infty$) since $V$ is independent of
$Y_0^\infty$, and where
for the fourth equality we used the fact that the median of
a~random variable is its best estimator with respect to the average
absolute deviation.

We now upperbound the second expectation on the right-hand side of~\eqref{low31}.
We have
\begin{eqnarray}\label{nils}
{\ex}\biggl|n+ \frac{\ell-X_{n}}{s} -
\tau_\ell\biggr|&=&{\ex}\biggl[\biggl|n+ \frac{\ell-X_{n}}{s} -
\tau_\ell\biggr|;\tau_\ell> n\biggr]
\nonumber
\\[-8pt]
\\[-8pt]
\nonumber
&&{}+
{\ex}\biggl[\biggl|n+ \frac{\ell-X_{n}}{s} -
\tau_\ell\biggr|;\tau_\ell\leq n\biggr] .
\end{eqnarray}
For the first term on the right-hand side of
\eqref{nils}, we use \eqref{stat2c} to
get
\begin{equation}\label{cav}
{\ex}\biggl[\biggl|n+ \frac{\ell-X_{n}}{s} -
\tau_\ell\biggr|\Big|X_{n},\tau_\ell>n\biggr] \leq
\sqrt{\frac{{2(\ell-X_{n})}}{{\pi s^3}}} + 2 + \frac{6}{s}
\end{equation}
on $\{X_{n} \leq\ell\}$.
Since $X_{n} \stackrel{\mathrm{d}}{=} sn + \sqrt{n} V$,
\begin{eqnarray*}
{\ex}(\ell-X_{n})_{+} &=& {\ex}\bigl(\ell-sn - \sqrt{n} V\bigr)_{+}\\
&\leq&\sqrt{n} {\ex}V_{+} +(\ell-sn)_+\\
&=&\sqrt{\frac{n}{2\pi}} +(\ell-sn)_+ .
\end{eqnarray*}
Hence, from Jensen's inequality
\[
{\ex}\sqrt{(\ell-X_{n})_{+}} \leq\sqrt{{\ex}(\ell-X_{n})_{+}}
\leq\Biggl(\sqrt{\frac{n}{2\pi}} +(\ell-sn)_+\Biggr)^{1/2},
\]
and therefore, by taking expectation on both
sides of \eqref{cav} we get
\begin{eqnarray}
\label{low31a}
&&{\ex}\biggl[\biggl|n+ \frac{\ell-X_{n}}{s} -
\tau_\ell\biggr|; \tau_\ell>n\biggr]
\nonumber
\\[-8pt]
\\[-8pt]
\nonumber
&&\qquad \leq\sqrt{\frac{2}{{\pi s^3}}}
\Biggl(\sqrt{\frac{n}{2\pi}} +(\ell-sn)_+\Biggr)^{1/2} + 2 + \frac{6}{s}.
\end{eqnarray}

For the second term on the right-hand side of \eqref{nils},
\begin{eqnarray}\label{bno}
&&{\ex}\biggl[\biggl|n+ \frac{\ell-X_{n}}{s} -
\tau_\ell\biggr|;\tau_\ell\leq n\biggr]\nonumber\\
&&\qquad\leq(n+\ell/s)\pr(\tau
_\ell
\leq n)+(1/s)\ex(|X_n|;\tau_\ell\leq
n)\nonumber\\
&&\qquad\leq(n+\ell/s)\pr(\tau_\ell\leq n)+(1/s)\bigl( \ex(X_n)^2\pr
(\tau
_\ell\leq n)\bigr)^{1/2}\\
&&\qquad= (n+\ell/s)\pr(\tau_\ell\leq n)+(1/s)\bigl( (n+s^2n^2)\pr(\tau
_\ell
\leq n)\bigr)^{1/2}\nonumber\\
&&\qquad\leq\bigl(2n+\sqrt{n}/s+\ell/s\bigr)\pr(\tau_\ell\leq n)^{1/2} ,\nonumber
\end{eqnarray}
where the second inequality follows from the Cauchy--Schwarz inequality.
Further,
\begin{eqnarray*}
\pr(\tau_\ell\leq n)&=&\sum_{i=1}^n\pr(\tau_\ell=i)\\
&\leq&\sum_{i=1}^n\pr(X_i\geq\ell)\\
&\leq& n Q\bigl((\ell-sn)/\sqrt{n}\bigr) .
\end{eqnarray*}
Hence, from \eqref{bno},
\begin{eqnarray}\label{fui}
&&\ex\biggl[\biggl|n+ \frac{\ell-X_{n}}{s} -
\tau_\ell\biggr|;\tau_\ell\leq
n\biggr]
\nonumber
\\[-8pt]
\\[-8pt]
\nonumber
&&\qquad\leq(2n^{3/2}+n/s+n^{1/2}\ell/s)Q\bigl((\ell-sn)/\sqrt
{n}\bigr)^{1/2} .
\end{eqnarray}
Combining \eqref{low31}--\eqref{nils}, \eqref{low31a}
and~\eqref{fui}, we get
\begin{eqnarray*}
\inf_{\eta(Y_0^{\infty})}{\ex}|\eta
-\tau_\ell|&\geq&
\sqrt{\frac{2n\varepsilon^2}{\pi s^2(1+\varepsilon^2)}}\\
&&{}-
\biggl({\frac{2n}{\pi^3s^6}}\biggr)^{1/4} -\sqrt{\frac{2(\ell
-sn)_+}{\pi s^3}}
- 2 - \frac{6}{s}\\
&&{}-(2n^{3/2}+n/s+n^{1/2}\ell/s)Q\bigl((\ell-sn)/\sqrt{n}\bigr)^{1/2} ,
\end{eqnarray*}
yielding the desired result.\vadjust{\goodbreak}
\end{pf*}

\begin{pf*}{Proof of Theorem~\protect\ref{prop1}} We prove the result only for
the Gaussian random walk setting. The proof for the Wiener process
setting follows the same arguments and is therefore
omitted.

Let $s=0$ and fix $0<\varepsilon<\infty$ and $0<\ell<\infty$.
We show that given $h>0$,
\[
\inf_{\eta(Y_0^\infty)}\ex
f(|\eta-\tau_\ell|)\geq k\ex f(\tau_h/2)
\]
for
some strictly
positive constant $k$. Hence, if $\ex
f(\tau_h/2)=\infty$ for some $h>0$, then
$\inf_{\eta(Y_0^\infty)}\ex
f(|\eta-\tau_\ell|)=\infty$, which yields claim
(i).

The first step consists in removing the noise in the observation
process $Y$ from time $t=2$ onward; that is, instead of
$\{Y_t\}_{t\geq0}$, we consider the better observation process
$\{Z_t\}_{t\geq0}$ defined as
\begin{eqnarray*}
Z_{0} &=& 0, \\
Z_{1} &=& X_1+\varepsilon W_{1} =
V_{1} + \varepsilon W_{1}, \\
Z_{t} &=& X_{t} - X_{t-1} = V_{t}, \qquad  t \geq2.
\end{eqnarray*}
Clearly, it is easier to estimate $\tau_{\ell}$ based on $Z_0^\infty$
than based on $Y_0^\infty$; one gets ${Y}_{t}-Y_{t-1}$ by
artificially adding the ``noise'' $\varepsilon W_t$ to $Z_t$, $t\geq
1$. Therefore,
\begin{equation}\label{icap}\inf_{\eta(Y_{0}^{\infty}
)}{\ex
}f(| \eta-
\tau_\ell|) \geq
\inf_{\eta({Z}_{0}^{\infty})} \ex f(|\eta-\tau_\ell
|) .
\end{equation}
Given $Z_{0}^{\infty}$, estimation
errors on $\tau_\ell$ are only due to the unknown value of~$X_{1}$
because of the unknown value of the noise $\varepsilon W_1$. In
turn, given $Z_0^\infty$, it is sufficient to consider only
$Z_{1}$ in order to estimate $X_1$ ($Z_1$ is a sufficient
statistic for $X_1$).

Below, we are going to make use of the important property that the
conditional density function of $X_1(=V_1)$ given $Z_1$ is not
degenerated since it is given by
\[
p(x|z) = \frac{\sqrt{1+\varepsilon^{2}}} {\varepsilon
\sqrt{2\pi}}\exp\biggl\{- \frac{(1+\varepsilon^{2})}
{2\varepsilon^{2}}\biggl(x -
\frac{z}{1+\varepsilon^{2}}\biggr)^{2}\biggr\} ,
\]
and since $\varepsilon>0$ by assumption.

Define $C = C(Z_1)= Z_1/(1+\varepsilon^2)-h/2$ and $D =
D(Z_1)= Z_1/(1+\varepsilon^2)+h/2$ where $h>0$ is some
arbitrary constant. From the above nondegeneration property it
follows that
\[
\pr(X_1 \leq C ) =\pr(X_1 \geq
D) \defeq
\delta_1=\delta_1(h,\varepsilon)>0 .
\]
Using this, we lower bound
\[
\inf_{\eta({Z}_{0}^{\infty})}
\ex f(|\eta-\tau_\ell|)
\]
by considering the following three-hypothesis problem: with probability
$1 - 2\delta_1$, $X_{1}$ is
known exactly (hence $\tau_\ell$ is known exactly as well), and with
equal probability $\delta_1$, $X_1$ is either equal to $C$ or
equal to $D$ (and no additional information on $X_1$ is
available). More specifically, denoting by $\tau_\ell^{\tiny{C}}$ the
value of $\tau_\ell$ when $X_{1} = C$, and by $\tau_\ell^{{\tiny{D}}}$
the value of $\tau_\ell$ when $X_{1} = d$, we have
\begin{eqnarray}\label{uuyy}
&&\inf_{\eta(Z_{0}^{\infty})} \ex f(| \eta- \tau_\ell
|)\nonumber\\
&&\qquad\geq\inf_{\eta(Z_{0}^{\infty})} \{ \ex[f(|
\eta-\tau_\ell|);X_1\leq
C]+ \ex[f(|\eta-\tau_\ell|);X_1\geq D]\}\nonumber\\
&&\qquad\geq\inf_{\eta(Z_{0}^{\infty})}\{ \ex[f(|
\eta-\tau_\ell^C |);X_1\leq
C]+ \ex[f(|\eta-\tau_\ell^D |);X_1\geq D]\}\\
&&\qquad= \delta_1 \inf_{\eta({Z}_{0}^{\infty})} \ex[
f(|\eta- \tau_\ell^{C} |) +
f(|\eta- \tau_\ell^{D}|)] \nonumber\\
&&\qquad \geq\delta_1 {\ex}f\biggl(\frac{\tau_\ell^{C} -
\tau_\ell^{D}}{2}\biggr) ,\nonumber
\end{eqnarray}
where the second and third inequalities follow from the assumption
that~$f(x)$ is nonnegative and nondecreasing. Further, since
$\tau_{\ell}^C\stackrel{\mathrm{d}}{=}\tau_{(\ell-C)_+}$ and since
$\tau_{\ell_1}-\tau_{\ell_2} \stackrel{\mathrm{d}}{=} \tau_{\ell
_1-\ell_2} $,
$\ell_1\geq\ell_2$, from \eqref{uuyy} we get
\begin{eqnarray}
\label{uuyy2}
\inf_{\eta(Z_{0}^{\infty})} \ex f(|
\eta- \tau_\ell|)&\geq&\delta_1 {\ex}f\biggl(\frac{\tau_\ell
^{C} -
\tau_\ell^{D}}{2}\biggr) \nonumber\\
&=&\delta_1 \ex f\biggl(
\frac{\tau_{(\ell- C)_+} - \tau_{(\ell- D)_+}}{2}\biggr)
\\
& =& \delta_1 \ex f\biggl(
\frac{\tau_{(\ell-C)_+-(\ell-D)_+}}{2}\biggr) .\nonumber
\end{eqnarray}
Now, on $\{D\leq\ell\}$ we have
\[
(\ell-C)_+-(\ell-D)_+=D-C=h ,
\]
therefore from \eqref{uuyy2} we get
\begin{equation}\label{zui}
\inf_{\eta(Z_{0}^{\infty})} \ex f(| \eta- \tau_\ell
|)\geq\delta_1\delta_2 \ex f\biggl( \frac{\tau_{h}}{2}\biggr) ,
\end{equation}
where
\[
\delta_2\defeq\delta_2(h,l,\varepsilon)=\pr(D\leq\ell)>0 .
\]
Claim (i) follows from \eqref{zui} and \eqref{icap}.

We now prove claim (ii). Let $\{B_{t}\}_{t \geq
0}$ be the standard
Wiener process whose value at integer times
$t=0,1,2,\ldots$ corresponds to process $X$, and
let
\[
\tilde{\tau}_h \defeq\inf\{t\geq0\dvtx  B_{t}
=h\}.
\]
Since $\tilde{\tau}_h \leq\tau_h$ for all $h\geq
0$, had we proved that
${\ex}f(\tilde{\tau}_\ell/2) =
\infty$, equation
\[
\ex f(\tau_{h}/2) = \infty
\]
would hold since $f(x)$ is nondecreasing.

From the reflection principle we get
\[
{\pr}(\tilde{\tau}_h \leq t) = 2{\pr}(B_{t} \geq h) =
2Q\biggl(\frac{h}{\sqrt{t}}\biggr), \qquad  h>0,  t > 0 ,
\]
hence for $h>0$,
\begin{eqnarray*}
{\ex}f(\tilde{\tau}_h/2) &=& 2\int _{0}^{\infty}
f(t/2)\,dQ\biggl(\frac{h}{\sqrt{t}}\biggr) \\
&=&
\frac{h}{\sqrt{2\pi}}\int _{0}^{\infty}\frac{f(t/2)}{t^{3/2}}
 e^{-h^2/2t}\,dt \\
&>& \frac{he^{-h/2}}{\sqrt{2\pi}}
\int _{h}^{\infty}\frac{f(t/2)}{t^{3/2}} \,dt.
\end{eqnarray*}
Therefore, if $f(x) = x^{p}$ with $p\geq1/2$, then $
{\ex}f(\tilde{\tau}_h/2) = \infty$ for all $h>0$. Claim~(ii) follows.
\end{pf*}

\section{Concluding remarks} We considered the
problem of sequentially estimating a random walk
first-passage time through noisy observations. Non\-asymptotic upper and
lower bounds
on minimum mean absolute deviation have been derived that coincide in certain
asymptotic regimes.

Extensions to other loss functions or non-Gaussian
settings may be envisioned. For the latter, an
interesting problem is
the derivation of a good lower bound. In fact, a main step in the
proof of Theorem~\ref{lb} [see argument after
equation~(\ref{low31})] takes advantage of the fact that $X_n$ and
$Y_n$ are jointly Gaussian.

Finally, note that at least some of the presented
arguments apply to stopping times other than first-passage times since the
basic property that we used is that $\tau$
concentrates around its mean (assuming a positive
drift).

\printaddresses


\begin{thebibliography}{12}
\expandafter\ifx\csname natexlab\endcsname\relax\def\natexlab#1{#1}\fi
\expandafter\ifx\csname url\endcsname\relax
  \def\url#1{\texttt{#1}}\fi
\expandafter\ifx\csname urlprefix\endcsname\relax\def\urlprefix{URL }\fi
\providecommand{\eprint}[2][]{\url{#2}}

\bibitem[{Anscombe et~al.(1947)Anscombe, Godwin and Plackett}]{AGP}
\textsc{Anscombe, F.~J.}, \textsc{Godwin, H.~J.} and \textsc{Plackett, R.~L.}
  (1947).
\newblock Methods of deferred sentencing in testing the fraction defective of a
  continuous output.
\newblock \textit{Supplement to the Journal of the Royal Statistical Society},
  \textbf{9} 198--217.

\bibitem[{Chang(1994)}]{Ch}
\textsc{Chang, J.} (1994).
\newblock Inequalities for the overshoot.
\newblock \textit{Ann. Appl. Prob.}, \textbf{4} 1223--1233.

\bibitem[{Lai(1998)}]{Lai1}
\textsc{Lai, T.} (1998).
\newblock Information bounds and quick detection of parameter changes in
  stochastic systems.
\newblock \textit{IEEE Trans.~Inform.~Th.}, \textbf{44} 2917--2929.

\bibitem[{Lorden(1970)}]{Lo1}
\textsc{Lorden, G.} (1970).
\newblock On excess over the boundary.
\newblock \textit{Ann. Math. Stat.}, \textbf{41} 350--357.

\bibitem[{Lorden(1971)}]{Lo}
\textsc{Lorden, G.} (1971).
\newblock Procedures for reacting to a change in distribution.
\newblock \textit{Ann. Math. Stat.}, \textbf{42} 1897--1908.

\bibitem[{Mogulskii(1973)}]{Mog1}
\textsc{Mogulskii, A.} (1973).
\newblock Absolute estimates for moments of certain boundary functionals.
\newblock \textit{Th. Prob. Appl.}, \textbf{18} 350--357.

\bibitem[{Moustakides(1986)}]{Mo}
\textsc{Moustakides, G.} (1986).
\newblock Optimal procedures for detecting changes in distribution.
\newblock \textit{Ann. Stat.}, \textbf{14} 1379--1387.

\bibitem[{Niesen and Tchamkerten(2009)}]{NT}
\textsc{Niesen, U.} and \textsc{Tchamkerten, A.} (2009).
\newblock Tracking stopping times through noisy observations.
\newblock \textit{IEEE Trans.~Inform.~Th.}, \textbf{55} 422--432.

\bibitem[{Shiryaev(1963)}]{Shi3}
\textsc{Shiryaev, A.~N.} (1963).
\newblock On optimum methods in quickest detection problems.
\newblock \textit{Th. Prob. and its App.}, \textbf{8} 22--46.

\bibitem[{Shiryayev(1978)}]{Shi}
\textsc{Shiryayev, A.~N.} (1978).
\newblock \textit{Optimal Stopping rules}.
\newblock Springer-Verlag.

\bibitem[{Tartakovsky and Veeravalli(2005)}]{TV}
\textsc{Tartakovsky, A.~G.} and \textsc{Veeravalli, V.~V.} (2005).
\newblock General asymptotic {B}ayesian theory of qickest change detection.
\newblock \textit{Th. Prob. Appl.}, \textbf{49} 458--497.

\bibitem[{Yakir(1994)}]{Y}
\textsc{Yakir, B.} (1994).
\newblock Optimal detection of a change in distribution when the observations
  form a {M}arkov chain with a finite state space.
\newblock In \textit{Change-point problems}, vol.~23. Institute of Mathematical
  Statistics, Lecture Notes, Monograph Series, 346--358.

\end{thebibliography}


\begin{thebibliography}{9}

\bibitem{Ch}
\begin{barticle}[mr]
\bauthor{\bsnm{Chang},~\bfnm{Joseph~T.}\binits{J.~T.}}
(\byear{1994}).
\btitle{Inequalities for the overshoot}.
\bjournal{Ann. Appl. Probab.}
\bvolume{4}
\bpages{1223--1233}.
\bid{issn={1050-5164}, mr={1304783}}
\bptok{imsref}%
\end{barticle}
\endbibitem

\bibitem{Gut}
\begin{barticle}[mr]
\bauthor{\bsnm{Gut},~\bfnm{Allan}\binits{A.}}
(\byear{1974}).
\btitle{On the moments and limit distributions of some first passage times}.
\bjournal{Ann. Probab.}
\bvolume{2}
\bpages{277--308}.
\bid{mr={0394857}}
\bptok{imsref}%
\end{barticle}
\endbibitem

\bibitem{Lo1}
\begin{barticle}[mr]
\bauthor{\bsnm{Lorden},~\bfnm{Gary}\binits{G.}}
(\byear{1970}).
\btitle{On excess over the boundary}.
\bjournal{Ann. Math. Statist.}
\bvolume{41}
\bpages{520--527}.
\bid{issn={0003-4851}, mr={0254981}}
\bptok{imsref}%
\end{barticle}
\endbibitem

\bibitem{Mog1}
\begin{barticle}[mr]
\bauthor{\bsnm{Mogulski{\u\i}},~\bfnm{A.~A.}\binits{A.~A.}}
(\byear{1973}).
\btitle{Absolute estimates for moments of certain boundary functionals}.
\bjournal{Theory Probab. Appl.}
\bvolume{18}
\bpages{350--357}.
\bid{issn={0040-361X}}
\bptok{imsref}%
\end{barticle}
\endbibitem

\bibitem{NT}
\begin{barticle}[mr]
\bauthor{\bsnm{Niesen},~\bfnm{Urs}\binits{U.}} \AND
  \bauthor{\bsnm{Tchamkerten},~\bfnm{Aslan}\binits{A.}}
(\byear{2009}).
\btitle{Tracking stopping times through noisy observations}.
\bjournal{IEEE Trans. Inform. Theory}
\bvolume{55}
\bpages{422--432}.
\bid{doi={10.1109/TIT.2008.2008115}, issn={0018-9448}, mr={2589707}}
\bptok{imsref}%
\end{barticle}
\endbibitem

\bibitem{Shi3}
\begin{barticle}[auto:STB|2012/04/05|08:30:55]
\bauthor{\bsnm{Shiryaev},~\bfnm{A.~N.}\binits{A.~N.}}
(\byear{1963}).
\btitle{On optimum methods in quickest detection problems}.
\bjournal{Theory Probab. Appl.}
\bvolume{8}
\bpages{22--46}.
\bptok{imsref}%
\end{barticle}
\endbibitem

\bibitem{Shi}
\begin{bbook}[mr]
\bauthor{\bsnm{Shiryayev},~\bfnm{A.~N.}\binits{A.~N.}}
(\byear{1978}).
\btitle{Optimal Stopping Rules}.
\bpublisher{Springer}, \baddress{New York}.
\bid{mr={0468067}}
\bptok{imsref}%
\end{bbook}
\endbibitem

\bibitem{Y}
\begin{bincollection}[mr]
\bauthor{\bsnm{Yakir},~\bfnm{Benjamin}\binits{B.}}
(\byear{1994}).
\btitle{Optimal detection of a change in distribution when the observations
  form a {M}arkov chain with a finite state space}.
In \bbooktitle{Change-point Problems ({S}outh {H}adley, {MA}, 1992)}.
\bseries{Institute of Mathematical Statistics, Lecture Notes---Monograph Series}
\bvolume{23}
\bpages{346--358}.
\bpublisher{IMS}, \baddress{Hayward, CA}.
\bid{doi={10.1214/lnms/1215463135}, mr={1477935}}
\bptok{imsref}%
\end{bincollection}
\endbibitem

\end{thebibliography}
\end{document}